\newtheorem{theorem}{Theorem}
\theoremstyle{plain}
\newtheorem{lemma}{Lemma}
\numberwithin{equation}{section}
\begin{document}
\title{A remark on the Heat Equation and minimal Morse Functions on Tori and
Spheres}
\author{C. Cadavid$^{a}$}
\address{a \ Corresponding Author: Carlos Cadavid, {\small {Universidad
EAFIT, Departamento de Ciencias B\'{a}sicas, Bloque 38, Office 417 }}Carrera
49 No. 7 Sur -50, Medell\'{\i}n, Colombia, Phone: (57)(4)-2619500 Ext 9790,
Fax:(57)(4) 2664284}
\email{ccadavid@eafit.edu.co.}
\author{J. D. V\'{e}lez$^b$}
\address{b \ Juan D. V\'{e}lez Universidad Nacional, Medell\'{\i}n Colombia
S.A }
\email{jdvelez14@gmail.com}
\keywords{Morse function; Heat Equation}
\thanks{MSC 58E05, 35K05}

\begin{abstract}
Let $(M,g)$ be a compact, connected riemannian manifold that is homogeneous,
i.e. each pair of points $p,q\in M$ have isometric neighborhoods. This paper
is a first step towards an understanding of the extent to which it is true
that for each \textquotedblleft generic" initial condition $f_{0}$, the
solution to $\partial f/\partial t=\Delta _{g}f,f(\cdot ,0)=f_{0}$ is such
that for sufficiently large $t$, $f(\cdot ,t)$ is a minimal Morse function,
i.e., a Morse function whose total number of critical points is the minimal
possible on $M$. In this paper we show that this is true for flat tori and
round spheres in all dimensions.
\end{abstract}

\maketitle

\section{Introduction}

The Heat Equation is arguably the most important partial differential
equation in mathematics and physics. This equation also seems to be
ubiquitous in geometry as well as in many other branches of mathematics. In
a suggestive article \cite{La}, Serge Lang and Jay Jorgenson called the Heat
Kernel "... \textit{a universal gadget which is a dominant factor
practically everywhere in mathematics, also in physics, and has very simple
and powerful properties}". In this short note we present evidence
corroborating the relevance of the Heat Equation, in a seemingly novel
direction. Specifically, we present two concrete cases in which the Heat
Equation \textquotedblleft discovers all by itself" minimal Morse functions.
This could be, however, a manifestation of a phenomenon that might hold on
more general homogeneous riemannian manifolds.\newline

Let $M$ be a closed, connected, oriented smooth manifold, and let $g$ be a
riemannian metric on $M$. On each tangent space $T_{p}(M)$, the metric $g$
determines a bilinear function $\langle \ ,\rangle _{g}$. For any given
smooth function $f$ on $M$, let us recall that the gradient is defined as
the vector field $\mathrm{grad}(f)$ in $T(M)$ that satisfies $\langle 
\mathrm{grad}(f),\zeta \rangle _{g}=\zeta (f)$, for all $\zeta \in T(M).$
Let us denote by $\nabla $ the Levi-Civita connection determined by the
metric $g$. For each smooth vector field $X$ on $M$, its divergence is
defined as $\mathrm{div}(X)=\mathrm{trace}(\zeta \rightarrow \nabla _{\zeta
}(X))$. The Laplacian (or Laplace-Beltrami operator) on $(M,g)$ of a smooth
function $f:M\rightarrow \mathbb{R}$ is defined as $\Delta _{g}f=\mathrm{div}%
(\mathrm{grad}(f))$.

The Heat Equation on $(M,g)$ is the partial differential equation $\partial
f/\partial t=\Delta _{g}f\,.$ A solution to the initial condition problem 
\begin{equation}
\left\{ 
\begin{array}{ccl}
\partial f/\partial t & = & \Delta _{g}f \\ 
f(\cdot ,0) & = & f_{0}\in L^{2}(M)%
\end{array}%
\right.  \label{pci}
\end{equation}%
is a continuos function $f:M\times (0,\infty )\rightarrow \mathbb{R}$ such
that

\begin{enumerate}
\item For each fixed $t>0$, $f(\cdot ,t)$ is $C^{2}$ function, and for each $%
x\in M$, $f(x,\cdot )$ is $C^{1}.$

\item $\partial f/\partial t=\Delta _{g}f,$ and 
\begin{equation*}
\lim_{t\rightarrow 0^{+}}\int_{M}f(x,t)\psi (x)dV(x)=\int_{M}f_{0}(x)\psi
(x)dV(x)\,,
\end{equation*}%
for all $\psi \in C^{\infty }(M)$.
\end{enumerate}

It is a non trivial fact that for each $t>0,$ $f_{t}=f(\cdot ,t)$ is smooth (%
\cite{Ch}, \cite{Ro}). It is well known that the solution to problem (\ref%
{pci}) can be obtained in the following way. First, it can be seen that the
eigenvalues of the operator $\Delta _{g}$, understood as those $\lambda \in 
\mathbb{R}$ \ for which $\Delta _{g}f+\lambda f=0$ for some smooth function $%
f$ nonidentically zero, are nonnegative and form a discrete set (\cite{Ch}, 
\cite{Ro}), $\lambda _{0}=0<\lambda _{1}<\cdots <\lambda _{j}<\cdots $.
Moreover, for each $j\geq 0$, the corresponding eigenspace $E_{j}$ has
finite dimension $m_{j}$ and $E_{j}\subset C^{\infty }(M)$. By the
assumptions about $M$, $E_{0}$ is the one dimensional vector space of
constants functions. For each $j\geq 0$ let $B_{j}=\{\phi
_{j,i}:i=1,...,m_{j}\}$ be an orthonormal basis for $E_{j}$. Their union $%
B=\cup B_{j}$ is an orthonormal basis for $L^{2}(M)$. Then, the solution to
problem (\ref{pci}) can be written as 
\begin{equation}
f=\sum_{j\geq 0}e^{-\lambda _{j}t}\sum_{i=1}^{m_{j}}\langle f_{0},\phi
_{j,i}\rangle _{L^{2}(M)}\phi _{j,i}\,,  \label{solucioncalor}
\end{equation}%
where $\langle -,-\rangle _{L^{2}(M)}$ denotes the inner product of $L^{2}(M)
$.\newline

We want to address the following question: to what extent is it true that
for each``generic" initial condition $f_{0}$, the solution to (\ref{pci}) is
such that for sufficiently large $t$, $f_{t}$ is a minimal Morse function,
i.e., a Morse function whose total number of critical points is less or than
equal to that of any other Morse function on $M$? Below, we show that the
answer to this question is affirmative for flat tori and round spheres. A
key ingredient in the proof is the following result which is a corollary of 
\textit{Mather's Stability Theorem }\cite{Ma}.

\begin{theorem}[Stability of Morse functions]
\label{smf} Let $M$ de a smooth manifold and let $h:M\rightarrow \mathbb{R}$
be any Morse function. Then, there exists an open neighborhood $W_{h}$ in
the $C^{\infty }$ topology (of $C^{\infty }(M)$) such that any function $%
\varphi \in W_{h}$ is Morse with the same number of critical points as $h$.
When $M$ is compact, the $C^{\infty }$ topology (of $C^{\infty }(M)$) is the
union of all $C^{r}$ topologies (of $C^{\infty }(M)$) \cite{Hi}, so in this
case the conclusion can be restated as follows: There exists $r>0$ and $%
\epsilon >0$ such that, if $\left\Vert \varphi -h\right\Vert _{r}<\epsilon $
for $\varphi \in C^{\infty }(M)$ and $\left\Vert \cdot \right\Vert _{r}$
being the $C^{r}$-norm, then $\varphi $ is also a Morse function with the
same number of critical points as $h$.
\end{theorem}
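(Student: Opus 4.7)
The plan is to prove the compact-$M$ version directly, using the implicit function theorem and the continuity of the Hessian, which is the standard way to obtain this corollary without invoking the full strength of Mather's theorem; the non-compact case then follows by the definition of the $C^{\infty }$ topology as the inductive limit of $C^{r}$ topologies on compact exhaustions. So assume $M$ is compact and fix a finite atlas.

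\textbf{Step 1: Localize around the critical points.} Since $M$ is compact and $h$ is Morse, its critical set is finite, say $\{p_{1},\dots ,p_{n}\}$. Choose disjoint coordinate charts $U_{i}$ around each $p_{i}$ and disjoint closed coordinate balls $\overline{B_{i}}\subset U_{i}$ centered at $p_{i}$. On the compact complement $K=M\setminus \bigcup _{i}\mathrm{int}(B_{i})$, the vector field $\mathrm{grad}(h)$ is nowhere zero, so $\|\mathrm{grad}(h)\|_{g}$ attains a positive minimum $\delta >0$ on $K$.

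\textbf{Step 2: Rule out extra critical points on the complement.} Express the $C^{1}$-norm (of a function minus $h$) through the fixed atlas. If $\|\varphi -h\|_{1}<\delta /2$, then a standard comparison shows $\|\mathrm{grad}(\varphi )-\mathrm{grad}(h)\|_{g}<\delta /2$ uniformly on $M$ (with implicit constants depending on the atlas and the metric), whence $\|\mathrm{grad}(\varphi )\|_{g}>\delta /2>0$ on $K$. In particular $\varphi $ has no critical points on $K$.

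\textbf{Step 3: Persistence of each critical point.} In local coordinates on $U_{i}$, consider the map $F_{h}=Dh:U_{i}\rightarrow \mathbb{R}^{m}$ where $m=\dim M$. Then $F_{h}(p_{i})=0$ and $DF_{h}(p_{i})$ is the Hessian of $h$ at $p_{i}$, which is invertible. A quantitative form of the inverse function theorem (equivalently, a Banach fixed point argument applied to the equation $F_{\varphi }=0$, or a Brouwer degree argument on $\partial B_{i}$) produces $\epsilon _{i}>0$ such that whenever $\|\varphi -h\|_{2}<\epsilon _{i}$, the map $F_{\varphi }=D\varphi $ vanishes at exactly one point $p_{i}^{\prime }\in \overline{B_{i}}$, and $DF_{\varphi }(p_{i}^{\prime })$ is invertible and close in operator norm to $DF_{h}(p_{i})$. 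Since the signature of a nondegenerate symmetric matrix is locally constant on the open set of nondegenerate symmetric matrices, the index of $\varphi $ at $p_{i}^{\prime }$ equals the index of $h$ at $p_{i}$.

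\textbf{Step 4: Conclude.} Taking $r=2$ and $\epsilon =\min (\delta /2,\epsilon _{1},\dots ,\epsilon _{n})$, every $\varphi $ with $\|\varphi -h\|_{2}<\epsilon $ is Morse with exactly $n$ critical points, matched one-to-one in index with those of $h$. The main obstacle is Step 3: one must make the implicit function theorem quantitative enough that the smallness threshold $\epsilon _{i}$ depends only on the size of $\overline{B_{i}}$, on the norm of $\mathrm{Hess}(h)(p_{i})^{-1}$, and on a uniform modulus of continuity for the Hessian, not on $\varphi $ itself; a contraction-mapping estimate for $x\mapsto x-DF_{h}(p_{i})^{-1}F_{\varphi }(x)$ on $\overline{B_{i}}$ accomplishes this cleanly and simultaneously yields uniqueness of $p_{i}^{\prime }$.
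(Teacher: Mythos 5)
Your argument is essentially correct for compact $M$, but it takes a genuinely different route from the paper: the paper offers no proof at all, simply asserting the statement as a corollary of Mather's Stability Theorem \cite{Ma} (infinitesimal stability implies stability), whereas you give a self-contained elementary proof via a quantitative inverse function theorem. Your decomposition --- a uniform lower bound on $\|\mathrm{grad}(h)\|_{g}$ away from the critical set, plus a contraction-mapping argument for $x\mapsto x-DF_{h}(p_{i})^{-1}F_{\varphi }(x)$ near each critical point --- is the standard ``hands-on'' proof and has the advantage of producing an explicit $r=2$ and an explicit $\epsilon $ in terms of $\delta $, the norms $\|\mathrm{Hess}(h)(p_{i})^{-1}\|$, and a modulus of continuity for $\mathrm{Hess}(h)$; it even yields the stronger conclusion that the indices are preserved, which the paper's statement does not claim. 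What the paper's citation buys instead is generality (Mather's theorem covers stability of arbitrary stable maps, not just Morse functions) at the cost of opacity. One caveat: your opening sentence disposes of the non-compact case too quickly. For non-compact $M$ the critical set need not be finite, Step 1 breaks down, and the $C^{\infty }$ topology is no longer the union of the $C^{r}$ topologies (that identification, quoted from \cite{Hi}, is stated in the theorem only for compact $M$); making the statement precise there requires the strong (Whitney) topology and a locally finite cover argument, not merely ``compact exhaustions.'' Since the paper applies the theorem only to $T^{n}$ and $S^{n}$, the compact case you prove is all that is actually needed, but you should either restrict your claim to compact $M$ or supply the extra argument.
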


\section{Tori}

Let $T^{n}$ denote the $n$-dimensional flat torus obtained as the quotient
of $\mathbb{R}^{n}$ by the obvious action of $(2\pi \mathbb{Z})^{n}$ by
isometries. In this case the spectrum of the Laplacian is the set $\{\lambda
_{j}:j\geq 0\}$ where $\lambda _{j}$ is the $j$-th integer that can be
written as a sum of squares $k_{1}^{2}+\cdots +k_{n}^{2}$ for some
nonnegative integers $k_{1},\ldots ,k_{n}$ \cite{Gr}. Notice that $\lambda
_{0}=0$ (as it should) and $\lambda _{1}=1$. An orthonormal basis $B_{j}$
for $E_{j}$ is given by the functions in $T^{n}$ induced by the functions in 
$\mathbb{R}^{n}$ of the form 
\begin{eqnarray}
g_{1,\mathbf{k}} &=&\cos (k_{1}x_{1}+\cdots +k_{n}x_{n})  \label{gkl} \\
g_{2,\mathbf{k}} &=&\sin (k_{1}x_{1}+\cdots +k_{n}x_{n}),
\end{eqnarray}%
where $\mathbf{k}$ denotes a $n$-tuple $(k_{1},\ldots k_{n})$, such that $%
k_{1}^{2}+\cdots +k_{n}^{2}=\lambda _{j}$. The set $B=\cup _{j\geq 0}B_{j}$
is an orthonormal basis of $L^{2}(T^{n})$. In particular, $B_{1}$ is $\{\cos
(x_{1}),\sin (x_{1}),\ldots ,\cos (x_{n}),\sin (x_{n})\}.$

\begin{lemma}
\label{minimalMorse} The function induced on $T^{n}$ by $h=%
\sum_{k=1}^{n}(a_{k}\cos (x_{k})+b_{k}\sin (x_{k}))$ is a Morse function, if
and only if, $a_{k}^{2}+b_{k}^{2}\neq 0$ for every $k$. Moreover, if it is
Morse it is a minimal Morse function.
\end{lemma}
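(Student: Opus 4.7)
The plan is to exploit the separable, amplitude-phase structure of $h$. For each $k$ set $r_k=\sqrt{a_k^{2}+b_k^{2}}$; when $r_k>0$ pick a phase $\phi_k$ so that $a_k\cos(x_k)+b_k\sin(x_k)=r_k\cos(x_k-\phi_k)$. Because each summand of $h$ depends on only the single coordinate $x_k$, the gradient and Hessian of $h$ on $T^{n}$ split into independent one-dimensional pieces, which should make the critical-point analysis transparent.

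First I would compute $\partial h/\partial x_k=-r_k\sin(x_k-\phi_k)$ (interpreted as the identically zero function when $r_k=0$) and observe that the mixed partials $\partial^{2}h/\partial x_j\partial x_k$ vanish identically for $j\neq k$. Hence $\mathrm{Hess}(h)$ is the diagonal matrix with entries $-r_k\cos(x_k-\phi_k)$. If some $r_j=0$, then the $j$-th row and column of the Hessian vanish identically and $\partial h/\partial x_j\equiv 0$; so either there are no critical points at all (should some other factor have no zero of its derivative, which here cannot happen since $\sin$ has zeros) or the critical set contains an entire $S^{1}$-factor of $T^{n}$, and in any case the Hessian is degenerate at every critical point, so $h$ is not Morse. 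Conversely, if every $r_k>0$, the critical points are exactly the $2^{n}$ configurations with $x_k\in\{\phi_k,\phi_k+\pi\}$ for each $k$, and at each of them the diagonal Hessian has nonzero entries $\pm r_k$, so $h$ is Morse with exactly $2^{n}$ critical points.

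For the minimality assertion I would invoke the weak Morse inequality: any Morse function on a closed manifold $M$ has at least $\sum_{i}\dim H^{i}(M;\mathbb{R})$ critical points. Since $H^{\ast}(T^{n};\mathbb{R})\cong\Lambda^{\ast}\mathbb{R}^{n}$ has total dimension $2^{n}$, this lower bound is exactly $2^{n}$, which the function $h$ attains. Consequently $h$ is a \emph{minimal} Morse function on $T^{n}$.

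The only subtlety in the argument is the ``only if'' direction when some $r_j=0$, where the phase $\phi_j$ is not defined; but the vanishing of the $j$-th row and column of $\mathrm{Hess}(h)$ is read off directly from $h$ itself, so one does not need the amplitude-phase rewriting to conclude degeneracy. The main conceptual input is therefore the lower bound $2^{n}$, which relies on the classical computation of the cohomology ring of the torus combined with the Morse inequalities; I expect this to be the only nonelementary ingredient of the proof.
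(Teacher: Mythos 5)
Your proposal is correct and follows essentially the same route as the paper: rewrite each summand in amplitude--phase form, observe that the Hessian is diagonal so that (non)degeneracy of critical points reduces to whether each $a_k^{2}+b_k^{2}$ vanishes, count the $2^{n}$ critical points, and match this against the Morse-inequality lower bound $\sum_i b_i(T^{n})=2^{n}$. Your treatment of the degenerate case $r_j=0$ is in fact a bit more explicit than the paper's one-line ``it is easy to see,'' but the substance is identical.
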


\begin{proof}
First, we observe that a smooth function on $T^{n}$ is Morse if and only if
its pullback is Morse. Now, each partial derivative $\partial h/\partial
x_{k}=-a_{k}\sin (x_{k})+b_{k}\cos (x_{k})$ can be rewritten in the form $%
A_{k}\sin (x_{k}+\theta _{k})=0$, where $A_{k}=\sqrt{a_{k}^{2}+b_{k}^{2}},$
and the $\theta _{k}$ are suitable constants. We notice that $h$ is Morse if
and only if whenever $(x)=(x_{1},\ldots ,x_{n})\in \mathbb{R}^{n}$ is a
solution to the system $A_{k}\sin (x_{k}+\theta _{k})=0, k=1,\ldots,n$, $(x)$
is \textit{not} a zero of the the determinant of the Hessian matrix, which
can be readily computed as%
\begin{equation*}
\det \mathrm{Hess}(h)=(-1)^{n}(a_{1}\cos (x_{1})+b_{1}\sin (x_{1}))\cdots
(a_{n}\cos (x_{n})+b_{n}\sin (x_{n}))\,.
\end{equation*}%
It is easy to see that this is the case if and only if $a_{k}^{2}+b_{k}^{2}
\neq 0,$ for each $k=1,\ldots ,n$. On the other hand, it is a general fact
that the sum of the betti numbers of a manifold $M$ is a lower bound for the
number of critical points of any Morse function on $M$. Using K\"{u}nneth's
formula we see that the sum of the betti numbers of $T^{n}$ equals $2^{n}$.
Finally, the system $A_{k}\sin (x_{k}+\theta _{k})=0$, $k=1,\ldots ,n,$
under the assumption that $a_{k}^{2}+b_{k}^{2}\neq 0$, for all $k$, has $%
2^{n}$ solutions in the box $[0,2\pi )^{n}$. This allows us to conclude that 
$h$ is a minimal Morse function.
\end{proof}

\begin{theorem}
\label{toros}There exists a set $S\subset C^{\infty }(T^{n})$, that is dense
and open in the $C^{\infty }$ topology, such that for any initial condition $%
f_{0}\in S$, if $f$ is corresponding solution to (\ref{pci}), then there
exists $T>0,$ depending on $f_{0}$, so that for each $t\geq T$, the function 
$f_{t}=f(\cdot ,t)$ is minimal Morse.
\end{theorem}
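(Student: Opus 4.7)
The plan is to isolate the $E_1$-component of the heat-flow solution as the asymptotically dominant mode, verify it is generically a minimal Morse function on $T^n$, and then apply the Stability Theorem \ref{smf}. Using expansion (\ref{solucioncalor}) with $\lambda_1 = 1$, I would write
\begin{equation*}
f(\cdot, t) = c_0 + e^{-t}\Phi + R(\cdot, t),
\end{equation*}
where $c_0$ is the mean value of $f_0$ (the zero-mode), $\Phi = \sum_{i=1}^{m_1}\langle f_0, \phi_{1,i}\rangle_{L^2}\phi_{1,i}$ is the projection of $f_0$ onto $E_1$, and $R(\cdot, t) = \sum_{j\geq 2}e^{-\lambda_j t}\sum_i \langle f_0, \phi_{j,i}\rangle_{L^2}\phi_{j,i}$ collects the higher-frequency modes. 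In terms of the basis $B_1$, $\Phi$ has the form $\sum_{k=1}^n (a_k\cos(x_k) + b_k\sin(x_k))$, with each $a_k$ and $b_k$ depending linearly and continuously on $f_0$.

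I would then define
\begin{equation*}
S = \{f_0 \in C^\infty(T^n) : a_k^2 + b_k^2 \neq 0 \text{ for every } k = 1, \ldots, n\},
\end{equation*}
and observe that, since each $a_k, b_k$ is a continuous linear functional of $f_0$ in every $C^r$-topology, $S$ is the complement of a finite union of closed codimension-$2$ linear subspaces, hence is $C^\infty$-open and dense.

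The central analytic step is to show that $e^{t}R(\cdot, t) \to 0$ in every $C^r$-norm as $t \to \infty$. For this I would invoke two standard ingredients: the Fourier coefficients $\langle f_0, \phi_{j,i}\rangle_{L^2}$ of a smooth function decay faster than any polynomial in $\lambda_j$, while the $C^r$-norm of each $\phi_{j,i}$ on the flat torus grows at most polynomially in $\lambda_j$ (Weyl-type bound). Since $\lambda_j \geq 2$ for $j\geq 2$, the rescaled tail $e^{t}R$ is termwise dominated by $e^{-(\lambda_j - 1)t}$ times polynomial factors in $\lambda_j$, giving an absolutely $C^r$-convergent series of order $O(e^{-t})$. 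Consequently $e^{t}(f(\cdot, t) - c_0)\to \Phi$ in $C^r(T^n)$ for every $r$.

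To finish, for any $f_0 \in S$ Lemma \ref{minimalMorse} guarantees that $\Phi$ is minimal Morse, so Theorem \ref{smf} furnishes $r$ and $\epsilon > 0$ such that the $C^r$-ball of radius $\epsilon$ around $\Phi$ consists entirely of Morse functions with $2^n$ critical points. The convergence above yields $T = T(f_0)$ for which $e^{t}(f(\cdot, t) - c_0)$ lies in this ball whenever $t \geq T$; and since subtracting the constant $c_0$ and dividing by the positive scalar $e^t$ preserves both the Morse property and the critical-point count, $f(\cdot, t)$ itself is minimal Morse for all $t \geq T$. The only substantive technical step is the $C^r$-convergence in the third paragraph; everything else follows mechanically from Lemma \ref{minimalMorse}, Mather stability, and the openness/density of $S$.
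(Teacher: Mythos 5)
Your proposal is correct and follows essentially the same route as the paper's proof: isolate the projection $\Phi=h_1$ onto $E_1$, show the rescaled tail $e^{t}R(\cdot,t)$ tends to $0$ in every $C^{r}$-norm using polynomial bounds on the eigenfunctions against the exponential decay, characterize $S$ via Lemma \ref{minimalMorse}, and conclude with Theorem \ref{smf}. The only cosmetic differences are that you describe $S$ directly by the condition $a_k^2+b_k^2\neq 0$ and get openness and density from the fact that $S$ is the complement of a finite union of proper closed subspaces, whereas the paper argues these two points by hand, and your tail estimate invokes rapid decay of the Fourier coefficients where the paper gets by with only the $L^{2}$-bound $\lVert h_j\rVert_r\leq 2\lambda_j^{r+n}\lVert f_0\rVert_{L^2(T^n)}$.
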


\begin{proof}
Let $f_{0}=h_{0}+h_{1}+\cdots $ where each $h_{j}$ is the projection of $%
f_{0}$ on $E_{j}$. Let us fix a nonnegative integer $r$, and let $\left\Vert 
{\cdot }\right\Vert _{r}$ be the $C^{r}$ norm on $C^{\infty }(M)$ (see \cite%
{Hi}). As noticed in the introduction, $f_{t}=\sum_{j=0}^{\infty
}e^{-\lambda _{j}t}h_{j}$. In order to prove that $f_{t}$ is minimal Morse
for all $t$ sufficiently large, it suffices to show that the same is true
for $e^{\lambda _{1}t}(f_{t}-h_{0})$. We have 
\begin{equation}
\left\Vert e^{\lambda _{1}t}(f_{t}-h_{0})-h_{1}\right\Vert _{r}=e^{-(\lambda
_{2}-\lambda _{1})t}\left\Vert \sum_{j=2}^{\infty }e^{-(\lambda _{j}-\lambda
_{2})t}h_{j}\right\Vert _{r}\text{ }\,.  \label{fourier}
\end{equation}%
Now we show that the second factor in the right hand side of (\ref{fourier})
is bounded on $T^{n}\times \lbrack 1,\infty )$. Let us write $%
h_{j}=\sum\nolimits_{\mathbf{k}}(c_{j,\mathbf{k}}g_{1,\mathbf{k}}+d_{j,%
\mathbf{k}}g_{2,\mathbf{k}})$, where the $g_{1\mathbf{,k}}$ and $g_{2,%
\mathbf{k}}$ are as in (\ref{gkl}). It can be seen inductively that for $%
i=1,2$, then $\partial g_{i,\mathbf{k}}/\partial x_{i_{s}}\cdots \partial
x_{i_{1}}$  is equal to $\pm k_{i_{1}}\cdots k_{i_{s}}g_{l,\mathbf{k}}$, for
some $l=1,2$. Using these we can estimate the sum in (\ref{fourier}) as
follows:%
\begin{eqnarray*}
\left\vert \partial h_{j}/\partial x_{i_{s}}\cdots \partial
x_{i_{1}}\right\vert  &\leq &\left\vert \sum\nolimits_{\mathbf{k}}c_{j,%
\mathbf{k}}\partial g_{1,\mathbf{k}}/\partial x_{i_{s}}\cdots \partial
x_{i_{1}}\right\vert +\left\vert \sum\nolimits_{\mathbf{k}}d_{j,\mathbf{k}%
}\partial g_{2,\mathbf{k}}/\partial x_{i_{s}}\cdots \partial
x_{i_{1}}\right\vert  \\
&\leq &\left( \sum\nolimits_{\mathbf{k}}c_{j,\mathbf{k}}^{2}\right)
^{1/2}\left( \sum\nolimits_{\mathbf{k}}\left( \partial g_{1,\mathbf{k}%
}/\partial x_{i_{s}}\cdots \partial x_{i_{1}}\right) ^{2}\right) ^{1/2} \\
&&+\left( \sum\nolimits_{\mathbf{k}}d_{j,\mathbf{k}}^{2}\right) ^{1/2}\left(
\sum\nolimits_{\mathbf{k}}\left( \partial g_{2,\mathbf{k}}/\partial
x_{i_{s}}\cdots \partial x_{i_{1}}\right) ^{2}\right) ^{1/2}
\end{eqnarray*}%
by the Cauchy-Schwartz inequality.\newline

\noindent Since $\partial g_{i,\mathbf{k}}/\partial x_{i_{s}}\cdots \partial
x_{i_{1}}=\pm k_{i_{1}}\cdots k_{i_{s}}g_{l,\mathbf{k}}$, for some $l=1,2$.
Now, the number of all possible $n$ tuples $\mathbf{k}$ $=(k_{1},\ldots
,k_{n})$ such that $k_{1}^{2}+\cdots +k_{n}^{2}=\lambda _{j}$ is clearly
bounded above by $\lambda _{j}^{n}.$ Thus, 
\begin{equation*}
\left\vert \partial g_{i,\mathbf{k}}/\partial x_{i_{s}}\cdots \partial
x_{i_{1}}\right\vert \leq \lambda _{j}^{(r+n)/2}\leq \lambda _{j}^{r+n}.
\end{equation*}%
Hence,%
\begin{equation*}
\left\vert \partial h_{j}/\partial x_{i_{s}}\cdots \partial
x_{i_{1}}\right\vert \leq \lambda _{j}^{r+n}\left( \left( \sum\nolimits_{%
\mathbf{k}}c_{j,\mathbf{k}}^{2}\right) ^{1/2}+\left( \sum\nolimits_{\mathbf{k%
}}d_{j,\mathbf{k}}^{2}\right) ^{1/2}\right) ,
\end{equation*}%
and consequently, $\left\vert \partial h_{j}/\partial x_{i_{s}}\cdots
\partial x_{i_{1}}\right\vert \leq 2\lambda _{j}^{r+n}\left\Vert {f_{0}}%
\right\Vert _{L^{2}(T^{n})}.$ It follows immediately that $\left\Vert
h_{j}\right\Vert _{r}\leq 2\lambda _{j}^{r+n}\left\Vert {f_{0}}\right\Vert
_{L^{2}(T^{n})}\,.$ From this we get the estimate

\begin{equation}
\left\Vert \sum_{j=2}^{\infty }e^{-(\lambda _{j}-\lambda
_{2})t}h_{j}\right\Vert _{r}\leq 2\left\Vert {f_{0}}\right\Vert
_{L^{2}(T^{n})}\sum_{j=2}^{\infty }\lambda _{j}^{r+n}e^{-(\lambda
_{j}-\lambda _{2})t}.  \label{ultima}
\end{equation}
It is to verify the convergence of the series on the right hand side of \ref%
{ultima} for each fixed value of $t\geq 1$. Since the sum of this series
decreases as $t$ increases, we deduce that $\Vert \sum_{j=2}^{\infty
}e^{-(\lambda _{j}-\lambda _{2})t}h_{j}\Vert _{r}$ is bounded on $%
T^{n}\times \lbrack 1,\infty ).$ Finally, this implies that

\begin{equation}  \label{limite}
\lim_{t\rightarrow \infty }\left\Vert e^{\lambda
_{1}t}(f_{t}-h_{0})-h_{1}\right\Vert _{r}=0
\end{equation}
for each $r\geq 0$.\newline

Let us assume that $h_{1}$ is a minimal Morse function. By Theorem \ref{smf}
there exist $r=r(h_{1})>0$ and $\epsilon =\epsilon (h_{1})>0$ such that
every $\varphi \in C^{\infty }(M)$ with $\Vert \varphi -h_{1}\Vert
_{r}<\epsilon _{0}$ is Morse and has the same number of critical points as $%
h_{1}$. Since $\lim_{t\rightarrow \infty }\left\Vert e^{\lambda
_{1}t}(f_{t}-h_{0})-h_{1}\right\Vert _{r}=0$, there is a $T_{0}>0$ such that
if $t\geq T_{0}$, 
\begin{equation*}
\left\Vert e^{\lambda _{1}t}(f_{t}-h_{0})-h_{1}\right\Vert _{r}<\epsilon\,.
\end{equation*}
This implies that if $t\geq T_{0}$, the function $e^{\lambda
_{1}t}(f_{t}-h_{0})-h_{1}$ is Morse and has the same number of critical
points as $h_{1}$. This allows us to conclude that for $t\geq T_{0}$, $%
e^{\lambda _{1}t}(f_{t}-h_{0})-h_{1}$ is minimal Morse, and so it is $f_{t}$%
. The proof is finished by verifying that the subset $S\subset C^{\infty
}(T^{n})$ of all functions whose projection on $E_{1}$ is minimal Morse is
open and dense in the $C^{\infty }$ topology. Since the $C^{0}$ norm bounds
from above the $L^{2}$ norm ($T^{n}$ is compact), then two functions close
in the $C^{0}$ sense are also close in the $L^{2}$ sense and therefore the
coefficients in their Fourier expansions are also close. A combination of
this with Lemma \ref{minimalMorse} shows that a smooth function that is
close to a smooth function whose projection on $E_{1}$ is minimal Morse,
must also have projection on $E_{1}$ that is minimal Morse. Finally, let us
show that $S$ is dense in $C^{\infty }(T^{n}).$ It suffices to prove that
for any smooth $f_{0}=h_{0}+h_{1}+\cdots $, $r>0$, and $\epsilon >0,$ there
is $\widetilde{f}_{0}\in S$ such that $\Vert f_{0}-\widetilde{f}_{0}\Vert
_{r}<\epsilon .$ It is enough to define $\widetilde{f}_{0}$ with the same
Fourier expansion as $f_{0}$ except that $h_{1}$ is replaced by a minimal
Morse function $\widetilde{h}_{1}$, such that $\Vert h_{1}-\widetilde{h}%
_{1}\Vert _{r}<\epsilon $.
\end{proof}

\section{Spheres}

In this section with prove a result similar to Theorem \ref{toros} for the $%
n $-dimensional sphere $S^{n}=\{(x_{1},\ldots ,x_{n+1}):x_{1}^{2}+\cdots
+x_{n+1}^{2}=1\}\subset \mathbb{R}^{n+1},n\geq 1,$ with the metric induced
from $\mathbb{R}^{n+1}.$ In this case, it is well known that the eigenvalues
of the Laplacian operator are given by $\lambda _{j}=j(j+n-1),$ $j\geq 0$
and the corresponding eigenfunctions, the so called \textit{spherical
harmonics}, are given as the restriction to the sphere of homogeneous
polynomials $H(x_{1},\ldots ,x_{n+1})$ in the coordinates of $\mathbb{R}%
^{n+1},$ of total degree $j,$ which satisfy the condition 
\begin{equation}
\partial ^{2}H/\partial x_{1}^{2}+\cdots +\partial ^{2}H/\partial
x_{n+1}^{2}=0 \,.  \label{laplacian}
\end{equation}%
The corresponding eigenspace of $\lambda _{j},$ $E_{_{j}},$ turns out to be
a space of dimension $\tbinom{n+j}{n}-\tbinom{n+j-2}{n}$ \cite{Ga1}.(In the
latter formula it is understood that $\tbinom{a}{b}=0$ in case $a<b$.) When $%
j=1,$ $\lambda _{1}=n,$ and a basis for $E_{_{1}}$ is given by the $n+1$
coordinate functions $x_{1},\ldots ,x_{n+1}.$ Each nonzero linear form in
these variables is a Morse function with two critical points, which is the
minimal possible, since this is precisely the sum of the betti numbers of $%
S^{n}$.\newline

\noindent Moreover, if $h$ denotes the restriction of a harmonic homogeneous
polynomial $H$ to $S^{n}$, the following estimate holds for the sup norm ($%
C^{0}$-norm in $M$):%
\begin{equation}
\left\Vert h\right\Vert _{0}\leq C_{n}\tbinom{n+j}{n}^{1/2}\left\Vert
h\right\Vert _{L^{2}(S^{n})}\,,  \label{estimativo}
\end{equation}%
where $C_{n}$ is a constant that only depends on $n$ (\cite{Ga1}, Section
7). Moreover, in the $C^{r}$-norm on $S^{n},$ the following more general
estimate holds (\cite{Ga2})%
\begin{equation}
\left\Vert h\right\Vert _{r}\leq C_{n}\tbinom{n+j}{n}^{1/2}(1+\lambda
_{j})^{r/2}\left\Vert h\right\Vert _{L^{2}(S^{n})}.  \label{estimativo cr}
\end{equation}%
With these preliminaries we can state the following theorem.

\begin{theorem}
There exists a set $S\subset C^{\infty }(S^{n})$, that is dense and open in
the $C^{\infty }$ topology, such that for any initial condition $f_{0}\in S$%
, if $f$ is corresponding solution to (\ref{pci}), then there exists $T>0,$
depending on $f_{0}$, so that for each $t\geq T$, the function $%
f_{t}=f(\cdot ,t)$ is minimal Morse.
\end{theorem}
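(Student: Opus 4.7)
The plan is to mimic the torus proof almost line by line, with the Fourier estimate on $T^n$ replaced by the spherical harmonics estimate (\ref{estimativo cr}), and with a simpler choice of the dense open set $S$ coming from the fact that every nonzero element of $E_1$ is already a minimal Morse function on $S^n$.

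First, I would decompose $f_0 = h_0 + h_1 + h_2 + \cdots$ into its projections onto the eigenspaces $E_j$, so that the solution is $f_t = \sum_{j\ge 0} e^{-\lambda_j t} h_j$. As in the torus case, proving that $f_t$ is minimal Morse for large $t$ reduces to proving the same statement for $e^{\lambda_1 t}(f_t - h_0) = h_1 + \sum_{j\ge 2} e^{-(\lambda_j-\lambda_1)t} h_j$, and it is enough to show that
\begin{equation*}
\bigl\Vert e^{\lambda_1 t}(f_t - h_0) - h_1 \bigr\Vert_r = e^{-(\lambda_2-\lambda_1)t}\Bigl\Vert \sum_{j\ge 2} e^{-(\lambda_j-\lambda_2)t} h_j \Bigr\Vert_r \longrightarrow 0
\end{equation*}
for every $r\ge 0$. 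This in turn requires a uniform bound on the second factor for $t\ge 1$.

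The main estimate is exactly where the sphere replaces the torus computation. Using (\ref{estimativo cr}) and $\Vert h_j\Vert_{L^2(S^n)} \le \Vert f_0\Vert_{L^2(S^n)}$, I get
\begin{equation*}
\Vert h_j\Vert_r \;\le\; C_n \tbinom{n+j}{n}^{1/2}(1+\lambda_j)^{r/2}\,\Vert f_0\Vert_{L^2(S^n)} ,
\end{equation*}
and hence
\begin{equation*}
\Bigl\Vert \sum_{j\ge 2} e^{-(\lambda_j-\lambda_2)t} h_j \Bigr\Vert_r \;\le\; C_n\Vert f_0\Vert_{L^2(S^n)} \sum_{j\ge 2} \tbinom{n+j}{n}^{1/2}(1+\lambda_j)^{r/2} e^{-(\lambda_j-\lambda_2)t}.
\end{equation*}
Since $\lambda_j = j(j+n-1)$ grows quadratically in $j$ while $\binom{n+j}{n}^{1/2}$ and $(1+\lambda_j)^{r/2}$ grow only polynomially, the series converges for every $t\ge 1$ and, being monotone decreasing in $t$, is bounded on $[1,\infty)$. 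Combined with the prefactor $e^{-(\lambda_2-\lambda_1)t}$, this gives the desired $C^r$-convergence for all $r$, hence $C^\infty$-convergence.

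For the definition of $S$, I would let $S = \{\,f_0\in C^\infty(S^n) : h_1\neq 0\,\}$, where $h_1$ is the projection of $f_0$ onto $E_1$. Since every nonzero element of $E_1$ is a linear form on $\mathbb{R}^{n+1}$ restricted to $S^n$, and any such restriction is a minimal Morse function with exactly the two critical points $\pm h_1/|h_1|$, the hypothesis needed to invoke Theorem \ref{smf} is automatically satisfied: there exist $r,\epsilon>0$ so that every $\varphi$ with $\Vert\varphi-h_1\Vert_r<\epsilon$ is minimal Morse. By the convergence proved above, $e^{\lambda_1 t}(f_t-h_0)$ falls inside this Mather neighborhood for all sufficiently large $t$, and scaling and adding a constant do not affect the critical structure, so $f_t$ itself is minimal Morse. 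Openness of $S$ in the $C^\infty$ topology follows because $C^\infty$-closeness implies $L^2$-closeness (by compactness of $S^n$), hence closeness of the $E_1$-projections in $L^2$, hence in the finite dimensional space $E_1$ itself; the set of nonzero elements of $E_1$ is open. Density follows because any $f_0$ with $h_1=0$ can be perturbed by adding $\delta x_1$ with $\delta$ arbitrarily small to obtain an element of $S$.

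The main technical obstacle is verifying the convergence of the series in the key estimate, but because $\lambda_j$ grows quadratically in $j$ on $S^n$ (faster than the polynomial factors $\binom{n+j}{n}^{1/2}$ and $(1+\lambda_j)^{r/2}$), this is a routine dominated convergence argument rather than a delicate one. Everything else (Mather stability, the choice of $S$, openness and density) is essentially formal once the minimality statement for nonzero elements of $E_1$ is noted.
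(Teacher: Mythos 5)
Your proposal is correct and follows essentially the same route as the paper: reduce to the $C^{r}$-convergence of $e^{\lambda _{1}t}(f_{t}-h_{0})$ to $h_{1}$, control the tail $\sum_{j\geq 2}e^{-(\lambda _{j}-\lambda _{2})t}h_{j}$ via the spherical-harmonics estimate (\ref{estimativo cr}), and conclude with Mather stability applied to the (automatically minimal Morse) nonzero projection onto $E_{1}$. The only difference is cosmetic: you bound each $\left\Vert h_{j}\right\Vert _{L^{2}(S^{n})}$ by $\left\Vert f_{0}\right\Vert _{L^{2}(S^{n})}$ and sum the resulting series directly, whereas the paper splits the sum by Cauchy--Schwarz and uses Bessel's inequality on $\sum_{j}\left\Vert h_{j}\right\Vert _{L^{2}(S^{n})}^{2}$; both yield the required uniform bound on $[1,\infty )$.
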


\begin{proof}
Let $f_{0}=h_{0}+h_{1}+\cdots $ where each $h_{j}$ is the projection of $%
f_{0}$ on $E_{j}$, and let $f_{t}$ denote the solution to the Heat Equation
with initial condition $f_{0}$. In order to prove that $f_{t}$ is minimal
Morse for all $t$ sufficiently large, it suffices to show that the same is
true for $e^{\lambda _{1}t}(f_{t}-h_{0})$. Let us fix a positive integer $r.$
As in the previous proof, the key issue is to show that $\Vert
\sum_{j=2}^{\infty }e^{-(\lambda _{j}-\lambda _{2})t}h_{j}\Vert _{r}$ is
bounded on $S^{n}\times \lbrack 1,\infty ).$ Let us notice that $\lambda
_{j}-\lambda _{2}=j(j+n-1)-2(n+1)$ becomes greater that $n+j$ for any $j$
sufficiently large. On the other hand, $e^{x}$ is also greater than $x^{2}%
\tbinom{n+x}{n}(1+x)^{r}$ for all $x\geq N$. Thus, for all $j\geq N^{\prime
}\geq N$, and $t\geq 1$ 
\begin{equation*}
e^{-2(\lambda _{j}-\lambda _{2})t}\tbinom{n+j}{n}(1+j)^{r}<1/j^{2}\,.
\end{equation*}%
This implies that%
\begin{eqnarray*}
\left\Vert \sum_{j=N^{\prime }}^{m}e^{-(\lambda _{j}-\lambda
_{2})t}h_{j}\right\Vert _{r} &\leq &\sum_{j=N^{\prime }}^{m}e^{-(\lambda
_{j}-\lambda _{2})t}\left\Vert h_{j}\right\Vert _{r} \\
&\leq &C_{n}\sum_{j=N^{\prime }}^{m}e^{-(\lambda _{j}-\lambda _{2})t}\tbinom{%
n+j}{n}^{1/2}(1+\lambda _{j})^{r/2}\left\Vert h_{j}\right\Vert
_{L^{2}(S^{n})} \\
&\leq &C_{n}\left( \sum_{j=N^{\prime }}^{m}e^{-2(\lambda _{j}-\lambda _{2})t}%
\tbinom{n+j}{n}(1+\lambda _{j})^{r}\right) ^{1/2}\left( \sum_{j=N^{\prime
}}^{m}\left\Vert h_{j}\right\Vert _{L^{2}(S^{n})}^{2}\right) ^{1/2} \\
&\leq &C_{n}\left\Vert f_{0}\right\Vert _{L^{2}(S^{n})}\sum_{j=1}^{\infty
}1/j^{2}\,.
\end{eqnarray*}%
Hence, 
\begin{equation*}
\left\Vert \sum_{j=2}^{\infty }e^{-(\lambda _{j}-\lambda
_{2})t}h_{j}\right\Vert _{r}\leq K_{0}+C_{n}\left\Vert f_{0}\right\Vert
_{L^{2}(S^{n})}\sum_{j=1}^{\infty }1/j^{2}\,,
\end{equation*}%
where $K_{0}=\Vert \sum_{j=2}^{N^{\prime }-1}e^{-(\lambda _{j}-\lambda
_{2})}h_{j}\Vert _{r}$, which proves the claim. The rest of the proof is
exactly as that of the theorem above.
\end{proof}

\section{Acknowledgements}

We thank the Universidad Nacional of Colombia and Universidad Eafit for
their invaluable support.

\end{document}